\documentclass[12pt,reqno]{amsart}
\usepackage{amsmath, amsthm, amssymb, stmaryrd}
\usepackage{cleveref}
\usepackage{bbm}
\usepackage{mathrsfs}
\usepackage{makecell}
\usepackage{geometry} 

\geometry{
  left=2.5cm, 
  right=2.5cm, 
  top=2.5cm, 
  bottom=2.5cm, 
  includehead, 
  includefoot, 
}

\setcounter{MaxMatrixCols}{16}
\usepackage{tikz,ifthen}
\usepackage{graphicx}
\usetikzlibrary{patterns}

\crefformat{section}{\S#2#1#3} 
\crefformat{subsection}{\S#2#1#3}
\crefformat{subsubsection}{\S#2#1#3}

\newtheorem{theorem}{Theorem}[section]
\newtheorem{lemma}[theorem]{Lemma}
\newtheorem{corollary}[theorem]{Corollary}

\theoremstyle{definition}
\newtheorem{definition}[theorem]{Definition}

\theoremstyle{remark}

\numberwithin{equation}{section}

\def\rank{{ \normalfont \text{rank} \,}}

\newcommand{\floor}[1]{\left \lfloor #1 \right \rfloor}
\newcommand{\ceil}[1]{\left \lceil #1 \right \rceil}

\def\bfc{{\mathbf c}}
\def\bfd{{\mathbf d}} \def\bfe{{\mathbf e}}

\def\bfx{{\mathbf x}}

\def\bfZ{{\mathbf Z}}

\def\C{{\mathbb C}}

\def\N{{\mathbb N}}  
\def\R{{\mathbb R}}
\def\Z{{\mathbb Z}}

\def\a{{\alpha}}

\def\eps{\varepsilon}

\def\le{\leqslant} \def\ge{\geqslant}

\begin{document}
\title[multimagic and magic squares]{Existence of $K$-multimagic squares and magic squares of $k$th powers with distinct entries}
\author[Daniel Flores]{Daniel Flores}
\address{Department of Mathematics, Purdue University, 150 N. University Street, West 
Lafayette, IN 47907-2067, USA}
\email{flore205@purdue.edu}
\subjclass[2020]{11D45, 11D72, 11P55, 11E76, 11L07, 05B15, 05B20}
\keywords{Hardy-Littlewood method, additive forms in differing degrees, magic squares, multimagic squares.}
\date{}
\dedicatory{}
\begin{abstract}
 We demonstrate the existence of $K$-multimagic squares of order $N$ consisting of distinct integers whenever $N > 2K(K+1)$. This improves upon our earlier result in which we only required $N+1$ distinct integers. 

Additionally, we present a direct method by which our analysis of the magic square system may be used to show the existence of $N \times N$ magic squares consisting of distinct $k$th powers when
\[N > \begin{cases}
        2^{k+1} & \text{if $2 \le k \le 4$,} \\
        2 \ceil{k(\log k + 4.20032)} & \text{if $k \ge 5$,}
    \end{cases}\]
    improving on a recent result by Rome and Yamagishi.
\end{abstract}
\maketitle

\section{Introduction}\label{sec:intro}

An $N \times N$ matrix $\bfZ = (z_{i,j})_{1 \le i,j \le N}$ is a \emph{magic square} of order $N$ if the sum of the entries in each of its rows, columns, and two main diagonals are equal. Given $K \ge 2$ we say a matrix $\bfZ \in \Z^{N \times N}$ is a $K$-multimagic square of order $N$ or an \textbf{MMS}$(K,N)$ for short if the matrices
\[\bfZ^{\circ k} := (z_{i,j}^k)_{1 \le i,j \le N},\]
remain magic squares for $1 \le k \le K$. Here we expand on our previous investigation \cite{Flores2024}, where we saw that given any $K \ge 2$ and $N \in \N$, there exists many trivial examples of \textbf{MMS}$(K,N)$ utilizing at most $N$ distinct integers. Thus we previously focused on the problem of, given $K$, finding a lower bound for $N$ such that there exists an \textbf{MMS}$(K,N)$ which utilize $N+1$ or more digits. Via the circle method we proved in \cite{Flores2024} that 
\[N>2K(K+1)\] 
is a suitable lower bound on $N$ for this problem.

However, this may not be satisfactory for those familiar with magic squares as the typical parlance usually refers to magic squares with any repeated entries as \emph{trivial}, thus a more satisfactory result would be to determine a lower bound on $N$ in terms of $K$ for which there exists an \textbf{MMS}$(K,N)$ with distinct entries. 

This question has been considered in the past by several authors (see \cite{BoyerSite,Derksen2007,TrumpSite,Zhang2013,Zhang2019}) via constructive methods. We give a brief overview of the best known results in Table \ref{table:kmms}, the curious reader is encouraged to read the introductions of both \cite{Flores2024} and \cite{Rome2024} for more information over the history of magic squares.
\begin{figure}[htbp]
    \centering
    \begin{tabular}{|c|c|c|} 
\hline  
$K$ & \makecell{Smallest $N$ for which an \textbf{MMS}$(K,N)$ \\ with distinct entries is known to exist} & Attributed to \\ 
\hline  
2 & $6$ & J. Wroblewski \cite{BoyerSite} \\
3 & $12$ & W. Trump \cite{TrumpSite} \\
4 & $243$ & P. Fengchu \cite{BoyerSite} \\
5 & $729$ & L. Wen \cite{BoyerSite} \\
6 & $4096$ & P. Fengchu \cite{BoyerSite} \\
$K \ge 2$ & $(4K-2)^K$ & Zhang, Chen, and Li \cite{Zhang2019}\\
\hline  
\end{tabular}
\caption{Best known results for $K$-multimagic squares.}\label{table:kmms}
\end{figure}

Although the circle method tells us there exists \textbf{MMS}$(K,N)$ with at least $N$ distinct entries when $N > 2K(K+1)$, it could be the case that to establish the existence of \textbf{MMS}$(K,N)$ with all distinct entries we may require $N$ to be even larger relative to $2K(K+1)$. This difficulty may be seen in the recent work by Rome and Yamagishi \cite{Rome2024}, where they tackle the simpler problem of showing the existence of magic squares of distinct $k$th powers. Via the techniques in \cite{Rome2024} one may obtain an asymptotic formula for the number of a subset of $N \times N$ magic squares of $k$th powers (with potential repeats) as soon as
\begin{equation*}
    N \ge \begin{cases}
    2^{k+2}+\Delta & \text{if $2 \le k \le 4$,} \\
    4 \ceil{k (\log k + 4.20032)}+\Delta & \text{if $5 \le k$,}
\end{cases}
\end{equation*}
with $\Delta = 12$. However, Rome and Yamagishi end up requiring $\Delta = 20$ to ensure that all entries of these magic squares are \emph{distinct} $k$th powers. To understand why this increase in $N$ is required in \cite{Rome2024}, we first need to establish the notion of a \emph{partitionable matrix}.
\begin{definition}
    We say a matrix $C= [\bfc_1,\bfc_2,\ldots,\bfc_{rn}]$ of dimensions $r \times rn$ is partitionable if there exists disjoint sets $J_l \subset \{1,2,\ldots,rn\}$ of size $r$ for each $1 \le j \le n$ satisfying
    \[\rank (C_{J_l}) = r \text{ for all }1 \le l \le n,\]
    where $C_J$ denotes the submatrix of $C$ consisting of columns indexed by $J$.
\end{definition}
Upon examination of the methods used in \cite{Rome2024}, one sees that $N$ is required to be slightly larger is due to the difficulty of finding a large enough partitionable submatrix for the family of coefficient matrices associated with magic squares with particular repeated entries.

In \cite{Flores2024} we define the notion of a matrix \emph{dominating} a function as follows.
\begin{definition}
    We say that a matrix $C \in \C^{r \times s}$ dominates a function $f:\N \to \R^+$ if for all $J \subset \{1, \ldots, s\}$ we have
\begin{equation*}
    \text{rank}(C_J) \ge \min\left\{f(|J|),r\right\},
\end{equation*}
where $C_J = [\bfc_j]_{j \in J}$.
\end{definition}
Then by \cite[Lemma 1]{Low1988} if a matrix $C$ dominates a certain function we obtain information regarding its partitionable submatrices. This allows us to circumvent several difficulties encountered in \cite{Rome2024} and prove the following result.
\begin{theorem}\label{kmmsquarethm}
    Given $K \ge 2$ there exists infinitely many \textbf{MMS}$(K,N)$ consisting of $N^2$ distinct integers as soon as $N>2K(K+1)$.
\end{theorem}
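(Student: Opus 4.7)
The plan is to combine the circle method asymptotic from \cite{Flores2024} with a union bound over coincident pairs of entries. Writing $M(P)$ for the number of \textbf{MMS}$(K,N)$ with entries in $[-P,P]$, the analysis of \cite{Flores2024} produces an asymptotic
\[M(P) = c_{K,N}\, P^{d} + O(P^{d-\delta})\]
with $c_{K,N}>0$, some $\delta>0$, and an explicit exponent $d$ roughly of size $N^2 - (2N+1)K(K+1)/2$, valid under the hypothesis $N>2K(K+1)$. To upgrade the $N+1$ distinct entries of the earlier result to all $N^2$ entries being distinct, it suffices, by a trivial union bound over the $O_N(1)$ pairs of cells, to establish
\[M_{(i_1,j_1),(i_2,j_2)}(P) = O(P^{d-1})\]
for every pair of distinct cells $(i_1,j_1)\ne(i_2,j_2)$, where $M_{(i_1,j_1),(i_2,j_2)}(P)$ denotes the count of multimagic squares in the box additionally satisfying $z_{i_1,j_1}=z_{i_2,j_2}$.

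Each restricted counting problem fits directly into the Diophantine framework of \cite{Flores2024}: one augments the multimagic system (the collected row, column, and diagonal equations in each degree $1 \le k \le K$) by the single additional linear constraint $z_{i_1,j_1}-z_{i_2,j_2}=0$. The Hardy--Littlewood machinery then delivers an asymptotic of order $P^{d-1}$, \emph{provided} the augmented coefficient matrix satisfies the \emph{dominates} condition recalled above with essentially the same function used in \cite{Flores2024}. Once this is in hand, \cite[Lemma 1]{Low1988} converts dominance into partitionable submatrices of the size required by the minor/major arc dissection, and the rest of the circle method argument proceeds unchanged, the extra linear equation trimming exactly one power of $P$ from the main term.

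The crux is therefore a purely linear-algebraic statement: adjoining the single row $\bfe_{i_1,j_1}-\bfe_{i_2,j_2}$ to the linear coefficient matrix of the magic square system must yield a matrix that still dominates the target function, uniformly in the pair of cells. The new row has only two nonzero entries, so for sufficiently large column subsets $J$ the dominance inequality is already forced by the unaugmented matrix; the analysis reduces to a short case analysis for near-threshold $J$, according to whether $(i_1,j_1)$ and $(i_2,j_2)$ lie in a common row, common column, common main diagonal, or are otherwise unrelated. I expect this verification to be the main obstacle, since the dominance function must be tracked carefully in order not to sacrifice any of the range $N > 2K(K+1)$ secured by \cite{Flores2024}.

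Assembling the pieces, the number of \textbf{MMS}$(K,N)$ with all $N^2$ entries distinct and lying in $[-P,P]$ is at least
\[M(P) - \sum_{(i_1,j_1)\ne(i_2,j_2)} M_{(i_1,j_1),(i_2,j_2)}(P) = c_{K,N}\, P^{d} + O(P^{d-1}),\]
which is strictly positive for all sufficiently large $P$. Letting $P \to \infty$ then furnishes infinitely many \textbf{MMS}$(K,N)$ with $N^2$ distinct integer entries, completing the proof under the hypothesis $N > 2K(K+1)$.
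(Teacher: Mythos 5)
Your overall skeleton matches the paper's: count all \textbf{MMS}$(K,N)$ in a box via the asymptotic of \cite{Flores2024}, then subtract, for each pair of cells, the squares in which those two entries coincide, showing that each such restricted count saves a power of $P$. The gap is in the mechanism you propose for the restricted counts. You adjoin the linear equation $z_{i_1,j_1}-z_{i_2,j_2}=0$ as a new \emph{row} of the coefficient matrix and assert that ``the Hardy--Littlewood machinery then delivers an asymptotic of order $P^{d-1}$.'' But the framework of \cite{Flores2024}, and of Lemma \ref{simplelemma} here, treats systems $\sum_j c_{i,j}x_j^k=0$ for $1\le i\le r$ and $1\le k\le K$ in which one and the same matrix $C$ governs every degree. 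If your new row is imposed only in degree $1$, the augmented system has unequal numbers of equations across the degrees and falls outside this framework, so it is not true that ``the rest of the circle method argument proceeds unchanged''; if instead the row is imposed in all degrees $1\le k\le K$ (logically equivalent, since the $k=1$ instance forces the rest), the heuristic saving is $K(K+1)/2$ powers of $P$ rather than one, and you must now establish partitionability for a $(2N+1)\times N^2$ matrix --- a genuinely different and more delicate computation whose effect on the threshold $N>2K(K+1)$ you would have to re-examine.

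The paper avoids all of this by eliminating a variable rather than adding an equation: the solutions with $x_i=x_j$ are exactly the solutions of the system with matrix $C^{(i,j)}$, obtained by replacing column $i$ by $\bfc_i+\bfc_j$ and deleting column $j$, as in (\ref{splitrepterms}). This is an $r\times(s-1)$ system of the same shape as before, and only an \emph{upper bound} is needed for it: Lemma \ref{simplelemma} gives $\ll P^{(s-1)-rK(K+1)/2+\eps}$, i.e.\ a saving of one power of $P$ (up to $\eps$) coming from the lost variable rather than from a gained equation. Crucially, $C^{(i,j)}$ contains $s-2$ unmodified columns of $C$, so the required partitionable $r\times rK(K+1)$ submatrix is produced uniformly over all pairs $(i,j)$, with no case analysis on whether the two cells share a row, column, or diagonal: one checks once that $C_N^{\text{magic}}$ dominates the function $F$ built from $s$, $s-1$ and $s-2$ (using the rank bounds from \cite[Section 4]{Flores2024}) and invokes \cite[Lemma 1]{Low1988}. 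Your plan defers exactly this point to an unproved, pair-by-pair dominance verification for an augmented matrix, which is where the real content of the proof lies; replacing the added row by the substitution $z_{i_2,j_2}=z_{i_1,j_1}$ would bring your argument in line with the paper's and close the gap.
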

It is important to note here that our lower bound on $N$ remains unchanged. Additionally, just as in \cite{Flores2024}, one may easily show the following via the Green-Tao theorem.
\begin{corollary}
    Given $K \ge 2$ there exists infinitely many \textbf{MMS}$(K,N)$ consisting of $N^2$ distinct prime numbers as soon as $N>2K(K+1)$.
\end{corollary}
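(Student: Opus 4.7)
The plan is to apply Theorem \ref{kmmsquarethm} to obtain a single MMS$(K,N)$ with $N^2$ distinct integer entries and then invoke the Green--Tao theorem to replace these entries by primes in a manner that preserves the MMS structure.

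The key algebraic observation is that the MMS$(K,N)$ condition is invariant under every affine substitution $z_{i,j} \mapsto a + b z_{i,j}$ on the entries. This follows from the binomial identity
\[
\sum_{j=1}^{N}\bigl(a + b z_{i,j}\bigr)^k = \sum_{l=0}^{k}\binom{k}{l} a^{k-l} b^{l}\sum_{j=1}^{N} z_{i,j}^{l},
\]
valid for each $1 \le k \le K$, together with the identical computations for column sums and the two main diagonal sums. If $\bfZ = (z_{i,j})$ is a MMS$(K,N)$, then every inner power sum on the right is independent of $i$, whence the full sum is as well; the other three directions are handled identically. When $b \ne 0$ this substitution also preserves distinctness of entries.

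With this in hand, let $\bfZ$ be a MMS$(K,N)$ with $N^2$ distinct integer entries, as provided by Theorem \ref{kmmsquarethm}, and set $A = \min_{i,j} z_{i,j}$, $B = \max_{i,j} z_{i,j}$, and $L = B - A + 1$. By the Green--Tao theorem there exist infinitely many pairs $(p, D)$ with $D \ge 1$ for which $p, p+D, p+2D, \ldots, p+(L-1)D$ are all prime. For any such pair, consider the matrix $\bfZ'$ whose $(i,j)$-entry equals $p + (z_{i,j} - A)D$. Each entry lies in the above arithmetic progression and is therefore prime; the $N^2$ entries are pairwise distinct because $D \ge 1$ and the $z_{i,j}$ are distinct; and by the affine-invariance above $\bfZ'$ remains a MMS$(K,N)$. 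Varying $(p,D)$ over the infinite family supplied by Green--Tao yields infinitely many such $\bfZ'$.

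There is essentially no substantive obstacle here: Theorem \ref{kmmsquarethm} supplies the seed configuration, the Green--Tao theorem supplies arithmetic progressions of primes of arbitrary length, and the affine-invariance of the MMS$(K,N)$ condition glues them together. The only point that requires genuine verification is this affine-invariance, which is the elementary binomial computation displayed above.
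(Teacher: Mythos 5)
Your proof is correct and is precisely the argument the paper intends: the paper gives no details beyond citing the Green--Tao theorem (``just as in \cite{Flores2024}''), and the standard route is exactly your combination of Theorem \ref{kmmsquarethm}, the affine invariance of the \textbf{MMS}$(K,N)$ conditions under $z \mapsto a+bz$ (which holds because all row, column, and diagonal power sums $\sum z^{l}$ for $0 \le l \le K$ agree), and an embedding of the entries into a length-$L$ arithmetic progression of primes.
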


Finally, we present an analogous argument for finding magic squares of distinct k-th powers. This approach utilizes the notion of our matrix of coefficients dominating a particular function, allowing us to establish the following result.
\begin{theorem}\label{kthpowersThm}
    Given $k \ge 2$ there exists infinitely many $N \times N$ magic squares of distinct $k$th powers as soon as
    \[N > \begin{cases}
        2^{k+1} & \text{if $2 \le k \le 4$,} \\
        2 \ceil{k(\log k + 4.20032)} & \text{if $k \ge 5$.}
    \end{cases}\]
\end{theorem}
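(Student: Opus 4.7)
Adapting the Hardy--Littlewood analysis developed for Theorem~\ref{kmmsquarethm}, the plan is to apply the circle method directly to the magic square system at the single degree $k$, and then to sieve out coincidences via inclusion-exclusion. Let $M$ denote the $(2N+1) \times N^2$ coefficient matrix obtained from the magic square conditions after eliminating the common magic sum, write $\bfc_{i,j}$ for its column indexed by $(i,j)$, and set $f_k(\tet) = \sum_{|x| \le X} e(\tet x^k)$. The count $T(X)$ of $k$th power magic squares whose generators lie in $[-X, X]$ (with possible repetitions) is then
\[
    T(X) \;=\; \int_{\T^{2N+1}} \prod_{1 \le i,j \le N} f_k(\bfalp \cdot \bfc_{i,j}) \, d\bfalp.
\]

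The first step is to verify that $M$ dominates a suitable function in the sense defined earlier in the paper, essentially that every $s$-column submatrix of $M$ has rank at least roughly $\lceil s/2\rceil$ (capped at $2N+1$); feeding this into \cite[Lemma 1]{Low1988} produces partitionable submatrices of $M$ of the required size. A standard major/minor arc dissection then applies: on the major arcs the singular series and singular integral combine to produce a main term of order $X^{N^2 - k(2N+1)}$; on the minor arcs, Wooley's nested efficient congruencing estimate (for $k \ge 5$) or Hua-type mean value bounds (for $2 \le k \le 4$) are applied blockwise to the partitionable pieces, and the stated thresholds $N > 2^{k+1}$ and $N > 2 \lceil k(\log k + 4.20032) \rceil$ are precisely where these estimates begin to beat the main term.

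To pass to distinct entries, for each non-discrete partition $\pi$ of $\{1,\dots,N\}^2$ let $T_\pi(X)$ count those $k$th power magic squares satisfying $x_{i,j} = x_{i',j'}$ whenever $(i,j) \sim_\pi (i',j')$. Identifying the corresponding columns of $M$ yields a reduced matrix $M_\pi$ with $|\pi|$ columns, and the dominating property of $M$ transfers to $M_\pi$ in a controlled way: rank losses from column identifications are bounded by the number of identifications, so $M_\pi$ still dominates a function strong enough to run the same circle method analysis. This gives $T_\pi(X) \ll X^{|\pi| - k\, r_\pi}$ for a rank parameter $r_\pi$ associated to $M_\pi$, and the threshold on $N$ is chosen precisely so that $|\pi| - k r_\pi < N^2 - k(2N+1)$ strictly for every non-discrete $\pi$. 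Summing over the finitely many such $\pi$, the coincidence contributions are of strictly lower order than $T(X)$, so infinitely many $k$th power magic squares with distinct entries exist as $X \to \infty$.

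The main obstacle is uniform control over the dominating property under arbitrary column identifications, so that the rank inequality $|\pi| - k r_\pi < N^2 - k(2N+1)$ holds strictly at the stated threshold rather than requiring extra slack. This is exactly the step where Low's lemma and the dominating matrix framework replace the more ad hoc partitionable submatrix construction used in \cite{Rome2024}, which explains the removal of their $\Delta = 20$ slack from the lower bound on $N$.
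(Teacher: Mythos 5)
Your overall architecture (circle method for the magic-square system at degree $k$, Low's lemma via the dominating-function framework, then removal of coincidences) matches the paper's, but the step where you pass to distinct entries has a genuine gap, and it is exactly the step the whole argument is engineered to handle. You propose to sum over all non-discrete partitions $\pi$ of the index set and to control the reduced matrices $M_\pi$ by asserting that ``rank losses from column identifications are bounded by the number of identifications.'' This assertion is not justified, and for partitions with many merged indices it cannot rescue the argument: if $\pi$ merges $t$ of the $N^2$ indices, the only columns of $M_\pi$ whose behaviour you control a priori are the $N^2-t$ untouched ones, and $t$ can be comparable to $N^2$, so no partitionable submatrix of the required size is guaranteed. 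Controlling partitionable submatrices of coefficient matrices with merged columns is precisely the difficulty that forced Rome and Yamagishi to inflate $N$ by $\Delta=20$; asserting that the dominating property ``transfers in a controlled way'' does not remove that obstruction.

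The paper avoids the issue entirely by observing that only pairwise coincidences are needed: since the non-distinct solutions are $\bigcup_{i<j}\{x_i=x_j\}$, equation (\ref{splitrepterms}) reduces everything to upper bounds for the counts attached to $C^{(i,j)}$ for single pairs, and $C^{(i,j)}$ \emph{contains} the $r\times(s-2)$ submatrix of the original $C$ obtained by deleting columns $i$ and $j$. One never needs to control the merged column $\bfc_i+\bfc_j$ at all: the function $F$ is built with the $(s-2)$ term precisely so that \cite[Lemma 1]{Low1988} still yields a partitionable submatrix of size $r\times r\lfloor(s-2)/r\rfloor$ after the deletion, and Lemma \ref{simplelemma} then makes each coincidence term of strictly lower order. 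Two further points of divergence: the paper does not rerun the circle method from scratch but imports the asymptotics of \cite[Theorems 1.4 and 1.5]{Rome2024} under the domination hypothesis (the smooth-number variant over $\mathscr{A}(P^\eta)$ is what produces the $2\lceil k(\log k+4.20032)\rceil$ threshold for $k\ge 5$; your sketch omits this restriction), and the coefficient matrix is $2N\times N^2$ rather than $(2N+1)\times N^2$ after eliminating the magic sum, which is what makes the hypothesis $s\ge rm+2$ translate into exactly $N>2m$.
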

Thus, improving the recent result of Rome and Yamagishi \cite{Rome2024}. It is worth noting just as Rome and Yamagishi did in \cite{Rome2024pt2} that Theorem \ref{kthpowersThm} is not entirely optimal for $4 \le k \le 20$, we simply choose this representation of our theorem for convenience. We recommend the interested reader to read the introduction of \cite{Rome2024} for more detail on this matter. 

\textbf{Remarks}: The application of the circle method to this problem has been part of the mathematical folklore for at least 30 years, with discussions dating back to the early '90s in talks by Andrew Bremner (see \cite{Bremner1999, Bremner2001}). The recent breakthrough in this area lies in achieving a refined understanding of the coefficient matrix associated with the magic square system. Viewing a matrix as dominating a function appears to be the appropriate perspective, as it provides insight into the partitionability of submatrices. 

\textbf{Acknowledgments}: The author acknowledges the support of Purdue University, which provided funding for this research through the Ross-Lynn Research Scholar Fund. The author expresses profound gratitude to Trevor Wooley for his invaluable mentorship over the past five years and for highlighting Andrew Bremner's talks from the 1990s. Thanks are also extended to Nick Rome and Shuntaro Yamagishi for their insightful and collegial discussions on this problem. Finally, the author is grateful to the referee for their helpful comments.

\section{Finding solutions of additive systems with distinct entries}\label{sec:distinctentries}

Our basic parameter, $P$, is always assumed to be a large positive integer. Whenever $\eps$ appears in a statement, either implicitly or explicitly, we assert that the statement holds for every $\eps>0$. Implicit constants in Vinogradov's notation $\ll$ and $\gg$ may depend on $\varepsilon$, $r$, $s$, $k$, $K$, and the elements of the matrix $C$.

Let $C =[\bfc_1,\ldots,\bfc_s]= (c_{i,j})_{\substack{1 \le i \le r \\ 1 \le j \le s}} \in \Z^{r \times s}$ be given, and consider the diagonal system
\begin{equation}\label{GenSys}
    \sum_{1 \le j \le s}c_{i,j}x_j^k = 0 \quad  1 \le i \le r.
\end{equation}
We define $S_k(P;C)$ to be the set of solutions $\bfx \in \Z^s$ to (\ref{GenSys}) where $\max_{j}|x_j| \le P$. Then given $1 \le i< j \le s$, it is not difficult to see that
\[\#\{\bfx \in S_k(P;C): x_{i} = x_{j}\} = \# S_k\left(P;C^{(i,j)}\right),\]
where $C^{(i,j)}$ is the matrix obtained by substituting the $i$th column of $C$ with $\bfc_{i}+\bfc_{j}$ and deleting the $j$th column of $C$. 

Thus, if $S^*_k(P;C)$ denotes the subset of $S_k(P;C)$ with distinct entries we have that
\begin{equation}\label{splitrepterms}
    \# \bigcap_{1 \le k \le K}S^*_k(P;C) = \# \bigcap_{1 \le k \le K}S_k(P;C) + O\left(\sum_{1 \le i< j \le s} \# \bigcap_{1 \le k \le K}S_k \left(P;C^{(i,j)}\right) \right).
\end{equation}
Separately one may deduce from \cite[Lemma 3.4]{Flores2024} and \cite[Lemma 1]{Low1988} the following result.
\begin{lemma}\label{simplelemma}
    Let $K \ge 2$ and $C \in \Z^{r \times s}$ with $s \ge rK(K+1)$. If $C$ contains a partitionable submatrix of size $r \times rK(K+1)$, then one has the bound
    \[\# \bigcap_{1 \le k \le K}S_k(P;C) \ll P^{s-\frac{rK(K+1)}{2}+\eps}\]
    for any $\eps > 0$.
\end{lemma}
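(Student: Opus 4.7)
The plan is to combine the two cited results directly. The first step is to invoke \cite[Lemma 1]{Low1988}, which provides a quantitative link between partitionability and rank-domination: from the hypothesis that $C$ contains a partitionable submatrix of size $r \times rK(K+1)$ one extracts the statement that this submatrix dominates a suitable piecewise-linear function saturating at $r$, in the sense of the definition quoted in the introduction. This rank-domination statement is precisely the input condition in the hypothesis of \cite[Lemma 3.4]{Flores2024}.

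Next, I would express the counting function as a mean value of exponential sums via orthogonality,
\[
\#\bigcap_{1 \le k \le K} S_k(P;C) = \int_{[0,1]^{rK}} \prod_{j=1}^{s} f_j(\bfalp) \, d\bfalp,
\]
where $f_j(\bfalp) = \sum_{|x| \le P} e\bigl(\textstyle\sum_{i,k} \alpha_{i,k} c_{i,j} x^k\bigr)$ and the dual variables $\alpha_{i,k}$ run over $[0,1]$, indexed by $1 \le i \le r$ and $1 \le k \le K$. A direct application of \cite[Lemma 3.4]{Flores2024} to this mean value then produces the predicted square-root cancellation bound $P^{s - rK(K+1)/2 + \eps}$, in which the saving exponent $rK(K+1)/2 = r\sum_{k=1}^K k$ reflects the Hua-type saving of $rk$ contributed by each of the $K$ different degrees.

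The main obstacle is not in the reduction itself, which is essentially bookkeeping, but rather in the analytic content already encapsulated in \cite[Lemma 3.4]{Flores2024}. The strategy underlying that result is to exploit the $K(K+1)$ disjoint invertible $r \times r$ blocks supplied by partitionability so as to apply a Weyl differencing / Hua-type argument to each block independently, thereby harvesting the requisite saving per degree before reassembling via Hölder's inequality. Since that analytic groundwork has already been laid in our earlier paper, the proof proposed here consists simply of translating partitionability into rank-domination via Low, and then citing Lemma 3.4 of \cite{Flores2024} as a black box applied to the column structure of $C$.
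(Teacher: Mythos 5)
Your overall plan --- write the count as a mean value of exponential sums over the $rK$ dual variables, bound each of the $s-rK(K+1)$ columns outside the partitionable submatrix trivially by $P$, and then feed the $K(K+1)$ disjoint invertible $r\times r$ blocks into the mean value estimate of \cite[Lemma 3.4]{Flores2024} via H\"older --- is exactly the short deduction the paper intends; the paper itself offers nothing beyond the two citations. The one genuine misstep is your opening move: \cite[Lemma 1]{Low1988} runs in the opposite direction from the way you invoke it. That lemma converts rank-domination \emph{into} the existence of a partitionable submatrix (this is precisely how the paper uses it in the derivation of Lemma \ref{distvinolemma}: domination, then Low, then Lemma \ref{simplelemma}); it does not ``extract'' a domination statement from a partition, and no such extraction is needed here, since the hypothesis of the lemma already hands you the disjoint full-rank blocks $J_1,\dots,J_{K(K+1)}$ that the mean value argument consumes directly. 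If you did want the submatrix to dominate $x\mapsto x/(K(K+1))$, that follows trivially from the observation that any subset of the columns of an invertible $r\times r$ block is linearly independent --- no appeal to Low is required, and citing it for the (false in general, true here for elementary reasons) converse would be a logical error if left as stated.

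Two smaller points. First, you should make the trivial-bound step explicit: pull out $\prod_{j\notin J}\lvert f_j\rvert\le P^{\,s-rK(K+1)}$ before applying the mean value estimate to the remaining columns, since the saving comes only from the partitionable part. Second, at the critical exponent $K(K+1)$ for a degree-$K$ Vinogradov system the saving $rK(K+1)/2$ rests on the Vinogradov mean value theorem rather than on Weyl differencing or Hua's inequality; as you treat \cite[Lemma 3.4]{Flores2024} as a black box this does not affect the logic, but the description of its contents is inaccurate.
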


Notice that if $C$ dominates the function
\[\frac{x-r\{(s-2)/r\}}{\floor{(s-2)/2}},\]
then for any $1 \le i <j \le s$ one has that $C^{(i,j)}$ contains a submatrix of $C$ of size $r \times (s-2)$. Then, by \cite[Lemma 1]{Low1988} one has that this matrix contains a partitionable submatrix of size $r \times r\floor{\frac{s-2}{r}}.$ Combining this with (\ref{splitrepterms}), Lemma \ref{simplelemma}, and \cite[Theorem 2.2]{Flores2024} we deduce the following general result.
\begin{lemma}\label{distvinolemma}
    Let $K \ge 2$ and suppose that $C \in \Z^{r \times s}$ satisfies $s \ge rK(K+1)+2$. Then, if $C$ dominates the function
    \[F(x) = \max\left\{ \frac{x- r\{s/r\}}{\floor{s/r}},  \frac{x- r\{(s-1)/r\}}{\floor{(s-1)/r}}, \frac{x- r\{(s-2)/r\}}{\floor{(s-2)/r}} \right\},\]
    we have
    \[\# \bigcap_{1 \le k \le K}S^*_k(P;C) = P^{s-\frac{rK(K+1)}{2}}\left(\sigma_K(C) + o(1)  \right),\]
    where $\sigma_K(C) \ge 0$ is a real number depending only on $K$ and $C$. Additionally $\sigma_K(C) > 0$ if there exists nonsingular real and $p$-adic simultaneous solutions to the system (\ref{GenSys}) for all $1 \le k \le K$.
\end{lemma}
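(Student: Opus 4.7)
The plan is to combine the splitting identity \eqref{splitrepterms} with \cite[Theorem 2.2]{Flores2024} and Lemma \ref{simplelemma} to separate a main term from collision contributions. First I would rewrite
\[\# \bigcap_{1 \le k \le K} S^*_k(P;C) = \# \bigcap_{1 \le k \le K} S_k(P;C) + O\biggl(\sum_{1 \le i<j\le s}\# \bigcap_{1 \le k \le K} S_k(P;C^{(i,j)})\biggr),\]
so that the task reduces to obtaining an asymptotic for the unrestricted count and showing each of the $O(s^{2})$ collision counts is of strictly smaller order.

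For the main term, I would observe that the $F$-domination of $C$ includes domination by the function $(x-r\{s/r\})/\floor{s/r}$, so \cite[Lemma 1]{Low1988} applies to $C$ itself and produces a partitionable submatrix of size $r \times r\floor{s/r}$. Because the hypothesis $s \ge rK(K+1)+2$ forces $\floor{s/r} \ge K(K+1)$, this is in particular a partitionable submatrix of size $r \times rK(K+1)$. Invoking \cite[Theorem 2.2]{Flores2024} then yields
\[\# \bigcap_{1 \le k \le K} S_k(P;C) = P^{\,s-\frac{rK(K+1)}{2}}\bigl(\sigma_K(C)+o(1)\bigr),\]
with $\sigma_K(C)\ge 0$ and strictly positive whenever nonsingular real and $p$-adic solutions exist.

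For the error terms, I would fix $1\le i<j\le s$ and examine the $r\times(s-1)$ matrix $C^{(i,j)}$. There are two natural ways to locate a large partitionable submatrix: either discard the merged column $\bfc_i+\bfc_j$ (leaving an $(s-2)$-column submatrix of $C$) or retain it (leaving $s-1$ columns, with one of them being the sum). The second and third pieces of $F$ are engineered precisely so that, for any subset $J$ of the columns of $C^{(i,j)}$, the rank bound needed in \cite[Lemma 1]{Low1988} is inherited from the corresponding submatrix of $C$ (the merged column preserving rank up to the linear span). In either case the conclusion is the same partitionable submatrix of size $r \times rK(K+1)$ inside $C^{(i,j)}$, and Lemma \ref{simplelemma} then gives
\[\# \bigcap_{1 \le k \le K} S_k(P;C^{(i,j)}) \ll P^{\,(s-1)-\frac{rK(K+1)}{2}+\eps}.\]
Summing over the $O(s^{2})$ pairs produces a total contribution of $o\bigl(P^{\,s-rK(K+1)/2}\bigr)$, which is swallowed by the main term.

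The main obstacle I foresee is not the circle-method asymptotic itself, which is off-the-shelf from \cite[Theorem 2.2]{Flores2024}, but rather the bookkeeping that transfers the $F$-domination of $C$ to each modified matrix $C^{(i,j)}$ uniformly in $(i,j)$, so that \cite[Lemma 1]{Low1988} applies in every case. The three-piece structure of $F$ is specifically tailored to this: the $s/r$ piece secures the main term, while the $(s-1)/r$ and $(s-2)/r$ pieces cover respectively the two flavours of degeneration introduced by merging the $i$-th and $j$-th columns. Once this transfer is verified, the rest of the argument is a direct concatenation of the above inputs.
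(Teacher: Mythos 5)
Your proposal is correct and follows essentially the same route as the paper: split off the collision terms via \eqref{splitrepterms}, get the main term from \cite[Theorem 2.2]{Flores2024} using the first piece of $F$ together with \cite[Lemma 1]{Low1988}, and bound each $\# \bigcap_k S_k(P;C^{(i,j)})$ by discarding the merged column so that the $(s-2)$-column submatrix of $C$ (covered by the third piece of $F$) yields the partitionable $r \times rK(K+1)$ submatrix needed for Lemma \ref{simplelemma}. The paper likewise takes the ``discard the merged column'' route rather than retaining $\bfc_i+\bfc_j$, so no further transfer argument is needed.
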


\section{$K$-multimagic squares with distinct entries}

Note that a matrix $\bfZ = (z_{i,j})_{\substack{1 \le i,j \le N}}$ is an \textbf{MMS}$(K,N)$ if and only if for all $1 \le k \le K$ it satisfies the simultaneous conditions
\begin{equation}\label{MSSystem1}
    \sum_{1 \le i \le N} z_{i,j}^{k} = \sum_{1 \le i \le N} z_{i,i}^{k} \quad \text{ for }\quad  1 \le j \le N,
\end{equation}
\begin{equation}\label{MSSystem2}
    \sum_{1 \le j \le N} z_{i,j}^{k} = \sum_{1 \le j \le N} z_{j,N-j+1}^{k} \quad \text{ for }\quad  1 \le i \le N.
\end{equation}
One may wonder if these equations are equivalent to those of an \textbf{MMS}$(K,N)$, indeed it does not seem clear that the main diagonal and anti-diagonal are equal at first glance. One can show that this is implied by the above by simply summing over all $j$ in (\ref{MSSystem1}) and noting that this is equal to summing over all $i$ in (\ref{MSSystem2}). Upon dividing out a factor of $N$ one deduces that (\ref{MSSystem1}) and (\ref{MSSystem2}) imply
\[\sum_{1 \le i \le N} z_{i,i}^{k} = \sum_{1 \le j \le N} z_{j,N-j+1}^{k}.\]
Before we construct a matrix corresponding to this system we must first establish some notational shorthand. Let ${\bf1}_n$, or respectively ${\bf0}_n$, denote a $n$-dimensional vector of all ones, or respectively all zeros. Let $\bfe_n(m)$ denote the $m$th standard basis vector of dimension $n$. For a fixed $N$ we define 
\[D_1(N) = \{(i,j) \in ([1,N] \cap \Z)^2: i = j\},\]
and
\[D_2(N) = \{(i,j) \in ([1,N] \cap \Z)^2: i +j = N+1\}.\]
For each $(i,j) \in ([1,N] \cap \Z)^2$ we define the $2N$-dimensional vectors
\[\bfd_{i,j} = 
\begin{cases}
(\bfe_N(i)-{\bf1_N},\bfe_N(j) - {\bf1_N})  & (i,j) \in D_1(N) \cap D_2(N), \\
(\bfe_N(i)-{\bf1_N},\bfe_N(j))  & (i,j) \in D_1(N) \backslash D_2(N), \\
(\bfe_N(i),\bfe_N(j) - {\bf1_N})  & (i,j) \in D_2(N) \backslash D_1(N), \\
(\bfe_N(i),\bfe_N(j))  & \text{otherwise.} \\
\end{cases}\]
Let $\phi: [1,N^2] \cap \Z \to ([1,N] \cap \Z)^2$ be any fixed bijection, then the $2N \times N^2$ matrix
\[C^{\text{magic}}_{N} = C^{\text{magic}}_{N}(\phi) = [\bfd_{\phi(1)},\ldots,\bfd_{\phi(N^2)}],\]
corresponds to the system defined by (\ref{MSSystem1}) and (\ref{MSSystem2}) up to some arbitrary relabeling of variables defined by the bijection $\phi$. We now establish that one may apply Lemma \ref{distvinolemma} with $C=C_N^{\text{magic}}$.
\begin{lemma}
    For $N \ge 4$, we have that $C_N^{\text{magic}}$ dominates the function
    \[F(x) = \max\left\{ \frac{x- r\{s/r\}}{\floor{s/r}},  \frac{x- r\{(s-1)/r\}}{\floor{(s-1)/r}}, \frac{x- r\{(s-2)/r\}}{\floor{(s-2)/r}} \right\},\]
    with $s = N^2$ and $r=2N$.
\end{lemma}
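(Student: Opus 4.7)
The plan is to translate the rank computation for $(C^{\text{magic}}_N)_J$ into bipartite-graph combinatorics, and then carefully account for the two-dimensional correction arising from the diagonal modifications. Given $J\subseteq [N]^2$ with $|J|=x$, I associate to $J$ the bipartite graph $G_J$ on two copies of $[N]$ with edge set $J$, and let $V_1,V_2\subseteq [N]$ denote the non-isolated left and right vertex sets with $a=|V_1|$, $b=|V_2|$, and $c$ the number of connected components of $G_J$. Writing $C_J = (C^{\text{magic}}_N)_J$, I would first consider the ``unmodified'' bipartite incidence submatrix $\tilde C_J$ obtained by replacing each column $\bfd_{i,j}$ by $(\bfe_N(i),\bfe_N(j))$: the classical rank identity for bipartite incidence matrices yields $\rank(\tilde C_J)=a+b-c$.

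Second, each column may be written as $\bfd_{i,j}=(\bfe_N(i),\bfe_N(j))-\chi_{D_1}(i,j)\bfu-\chi_{D_2}(i,j)\bfv$, where $\bfu=(\mathbf{1}_N,\mathbf{0})$ and $\bfv=(\mathbf{0},\mathbf{1}_N)$. Setting $U=\mathrm{span}(\bfu,\bfv)$, the column spans $V$ of $\tilde C_J$ and $W$ of $C_J$ satisfy $V+U=W+U$, which yields the identity
\[
\rank(C_J)=\rank(\tilde C_J)+\dim(W\cap U)-\dim(V\cap U),
\]
and in particular $|\rank(C_J)-\rank(\tilde C_J)|\le 2$. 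A row-space analysis based on the parametrization $f(i,j)=\alpha_i+\beta_j-A\chi_{D_1}(i,j)-B\chi_{D_2}(i,j)$ (with $A=\sum\alpha_i$, $B=\sum\beta_j$) of the row space of $C^{\text{magic}}_N$ will both recover the full-rank statement $\rank(C^{\text{magic}}_N)=2N$ for $N\ge 3$ and produce the required lower bound on $\dim(W\cap U)$, via the cycle relations that diagonal and antidiagonal edges in $J$ impose on $(\alpha,\beta)$.

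The third step is the verification that $\rank(C_J)\ge\min(F(x),2N)$ for every admissible $J$. In the bulk range, the minimum of $a+b-c$ subject to $ab\ge x$ and $a,b\le N$ is essentially $2\lceil\sqrt{x}\,\rceil-1$ by AM-GM with integrality, achieved at balanced complete bipartite subgraphs $K_{\lceil\sqrt{x}\rceil,\lceil\sqrt{x}\rceil}$; the threefold maximum in the definition of $F(x)$ is calibrated precisely so that this lower bound dominates $F(x)$ throughout, with equality essentially at the extremal graphs. In the tail range $x\ge N^2-O(N)$, one has $a+b-c=2N-1$ whenever $V_1=V_2=[N]$ and $G_J$ is connected, and the diagonal modifications must supply the extra $+1$ to reach the cap $\min(F(x),2N)=2N$.

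The main obstacle lies in this boundary verification: whenever $a+b-c$ alone falls short of $\min(F(x),2N)$, one must show via the row-space analysis that the constraints forcing $x$ to be that large simultaneously force $\dim(W\cap U)>\dim(V\cap U)$. This reduces to confirming that enough diagonal and antidiagonal edges are present in $J$ to produce nontrivial $\bfu$- and $\bfv$-type elements of $W\cap U$. Handling the parities of $N$ separately, and verifying the smallest cases $N\in\{4,5\}$ directly, will likely be necessary to close the argument cleanly.
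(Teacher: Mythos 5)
Your framework is sound as far as it goes, but it attacks the wrong part of the problem and leaves the genuinely hard step unresolved. The paper's proof does not re-derive the rank behaviour of $(C_N^{\text{magic}})_J$ at all: it simply quotes the lower bound
\[\rank\bigl((C_N^{\text{magic}})_J\bigr)\ \ge\ \ceil{2\sqrt{|J|}}-1,\qquad |J|-N^2+3N-1,\qquad 2N\]
in the three ranges of $|J|$, already established in Section 4 of \cite{Flores2024}, and the entire content of the lemma is then the elementary verification that $F(x)$ lies below this bound. That verification requires writing $F$ out explicitly as $2x/N$ and $\frac{2x-4}{N-2}-4$ when $N$ is even (resp.\ $\frac{2x-2N+4}{N-1}$ when $N$ is odd) and checking three inequalities, the delicate point being equality at $x=N(N-1)-1$. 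You assert this comparison (``calibrated precisely so that this lower bound dominates $F(x)$'') rather than performing it, so the step that constitutes the paper's actual proof is absent from yours.

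The part you do develop --- the identity $\rank(\tilde C_J)=a+b-c$, the decomposition $\bfd_{i,j}=(\bfe_N(i),\bfe_N(j))-\chi_{D_1}(i,j)\bfu-\chi_{D_2}(i,j)\bfv$, and the modular identity $\rank(C_J)=\rank(\tilde C_J)+\dim(W\cap U)-\dim(V\cap U)$ --- is correct and amounts to a programme for reproving the cited rank condition, but two issues remain. First, the extremal bound in the bulk range is $\ceil{2\sqrt{x}}-1$, not $2\ceil{\sqrt{x}}-1$: the minimizers of $a+b-c$ subject to $\sum_i a_ib_i\ge x$ are near-balanced but not balanced complete bipartite graphs (for $x=6$, the graph $K_{2,3}$ gives $a+b-c=4$, whereas $2\ceil{\sqrt{6}}-1=5$), so your stronger claimed bound is false as stated; the correct bound $\ceil{2\sqrt{x}}-1$ is exactly what the comparison with $F$ needs. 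Second, and more seriously, you explicitly defer the boundary analysis --- showing that for $|J|\ge N(N-1)+1$ the diagonal and antidiagonal modifications force $\dim(W\cap U)>\dim(V\cap U)$ so that the rank reaches the cap $2N$, and that $\dim(W\cap U)\ge\dim(V\cap U)$ whenever $V\cap U\ne 0$ elsewhere --- and this is precisely the nontrivial content of the rank condition. As written, the proposal is a plausible but incomplete re-derivation of the input from \cite{Flores2024} rather than a proof of the lemma.
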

\begin{proof}
    We show in \cite[Section 4]{Flores2024} that $C_N^{\text{magic}}$ satisfies rank condition
    \[\rank((C_N^{\text{magic}})_J) \ge \begin{cases}
    \ceil{2 \sqrt{|J|}}-1 & \text{if }1 \le |J| \le N(N-1)-1, \\
    |J|-N^2+3N-1 & \text{if }N(N-1)-1 \le |J| \le N(N-1)+1, \\
    2N & \text{if }N(N-1)+1 \le |J| \le N^2.
    \end{cases} \]
    when $|J| \neq (N-1)^2+1$ and
    \[\rank((C_N^{\text{magic}})_J) \ge 2N-3,\]
    when $|J|= (N-1)^2+1$. Given this, if $|J| > N(N-1)$ one trivially has that
    \[\min\{F(|J|),2N\} = 2N \le \rank((C_N^{\text{magic}})_J).\]
    Let us now focus on the case in which $N$ is even and $|J| \le N(N-1)$. When $N$ is even one may show that
    \[F(x) = \begin{cases}
        \frac{2x}{N} & \text{if $0 \le x \le N(N-1)$}, \\
        \frac{2x-4}{N-2}-4 & \text{if $N(N-1) \le x$.}
    \end{cases}\]
    Thus, if $N(N-1)-1 \le |J| \le N(N-1)$ one has
    \[\min\{F(|J|),2N\} = \frac{2|J|}{N} \le |J| - N^2+3N-1 \le \rank((C_N^{\text{magic}})_J).\]
    Let us now suppose that $1 \le |J| < N(N-1)-1$ and $|J| \neq (N-1)^2+1$. Similarly in this range we have that
    \[\min\{F(|J|),2N\}= \frac{2|J|}{N} \le \ceil{2 \sqrt{|J|}}-1 \le \rank((C_N^{\text{magic}})_J).\]
    Let us now consider the case $|J| = (N-1)^2+1$, since $N \ge 4$ we have that
    \[\min\{F(|J|),2N\} = 2N-4+4/N \le 2N-3 \le \rank((C_N^{\text{magic}})_J).\]
    The case in which $N$ is odd may be established in a similar fashion.
\end{proof}
Hence, by Lemma \ref{distvinolemma} and utilizing the existence of nonsingular solutions to the magic square system proved in \cite{Flores2024} we deduce the following asymptotic formula.
\begin{theorem}
    For $K \ge 2$ and $N > 2K(K+1)$, let $M^*_{K,N}(P)$ denote the number of \textbf{MMS}$(K,N)$ consisting of $N^2$ distinct integers bounded in absolute value by $P$. Then there exists a constant $c > 0$ for which one has the asymptotic formula 
    \[M^*_{K,N}(P) \sim cP^{N(N-K(K+1))}.\]
\end{theorem}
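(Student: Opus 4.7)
The plan is to assemble the proof by feeding $C = C_N^{\text{magic}}$ into Lemma \ref{distvinolemma} and invoking the previous lemma together with the existence of nonsingular solutions established in \cite{Flores2024}. The argument is essentially an ``all ingredients are ready'' proof: the combinatorial work of understanding ranks of column-submatrices of $C_N^{\text{magic}}$ has been absorbed into the dominance statement, and the analytic work has been absorbed into Lemma \ref{distvinolemma}.

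First, I would verify the hypotheses of Lemma \ref{distvinolemma} with $r = 2N$ and $s = N^2$. The condition $s \ge rK(K+1) + 2$ becomes $N^2 \ge 2NK(K+1) + 2$, which is immediate from the hypothesis $N > 2K(K+1)$ (since this gives $N^2 > 2NK(K+1)$, and we certainly have $N \ge 3$, whence $N^2 - 2NK(K+1) \ge N \ge 2$). The previous lemma in this section shows that $C_N^{\text{magic}}$ dominates the prescribed function $F(x)$ for $N \ge 4$, which is ensured again by $N > 2K(K+1) \ge 12$.

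Next, I would identify $M^*_{K,N}(P)$ with $\# \bigcap_{1 \le k \le K} S^*_k(P; C_N^{\text{magic}})$. By construction, the simultaneous conditions defining a \textbf{MMS}$(K,N)$ are exactly those given by (\ref{MSSystem1}) and (\ref{MSSystem2}) for $1 \le k \le K$, and the coefficient matrix of these conditions is $C_N^{\text{magic}}$ up to the relabeling $\phi$. Requiring all $N^2$ entries $z_{i,j}$ to be distinct precisely corresponds to the ``starred'' solution set. Applying Lemma \ref{distvinolemma} therefore yields
\[M^*_{K,N}(P) = P^{N^2 - NK(K+1)}\bigl(\sigma_K(C_N^{\text{magic}}) + o(1)\bigr) = P^{N(N - K(K+1))}\bigl(\sigma_K(C_N^{\text{magic}}) + o(1)\bigr),\]
with $\sigma_K(C_N^{\text{magic}}) \ge 0$.

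Finally, to upgrade this to a genuine asymptotic with a positive constant, I would invoke the positivity clause of Lemma \ref{distvinolemma}, which requires nonsingular real and $p$-adic solutions to the magic square system for each $1 \le k \le K$. Such solutions were constructed in \cite{Flores2024} as part of establishing the weaker asymptotic there; I would cite that construction verbatim, noting that the positivity of $\sigma_K(C_N^{\text{magic}})$ depends only on the system (\ref{GenSys}) and is insensitive to the additional constraint of distinctness. Setting $c = \sigma_K(C_N^{\text{magic}}) > 0$ then completes the argument. The only place where genuine care is needed is the bookkeeping of dimensions and the exponent $s - rK(K+1)/2 = N^2 - NK(K+1)$; there is no substantial obstacle, since all the hard combinatorics (the dominance of $F$) and all the hard analysis (the Hardy--Littlewood treatment yielding Lemma \ref{distvinolemma}) are already available.
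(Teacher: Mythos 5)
Your proposal is correct and follows exactly the route the paper takes: the paper's entire proof consists of applying Lemma \ref{distvinolemma} with $C = C_N^{\text{magic}}$ (using the preceding dominance lemma) and citing the nonsingular real and $p$-adic solutions constructed in \cite{Flores2024} to get $\sigma_K(C_N^{\text{magic}}) > 0$. Your verification of the hypothesis $s \ge rK(K+1)+2$ and the exponent bookkeeping $s - rK(K+1)/2 = N(N-K(K+1))$ are both accurate.
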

This immediately implies Theorem \ref{kmmsquarethm}.

\section{Magic squares of distinct $k$th powers}\label{sec:kthpowers}

Let us now consider the problem of showing the existence of magic squares of distinct $k$th powers. One may repeat the arguments of Rome and Yamagishi \cite{Rome2024} where instead of requiring a lower bound on the size of the largest partitionable submatrix, we assume that the matrix of coefficients dominates the function
\[F(x) = \max\left\{ \frac{x- r\{s/r\}}{\floor{s/r}},  \frac{x- r\{(s-1)/r\}}{\floor{(s-1)/r}}, \frac{x- r\{(s-2)/r\}}{\floor{(s-2)/r}} \right\}.\]
It is clear that all arguments of \cite{Rome2024} follow from this assumption by \cite[Lemma 1]{Low1988} assuming $\floor{\frac{s-2}{r}}$ is large enough in terms of $k$. How large this term needs to be, as seen in Lemma \ref{simplelemma}, is directly determined by when one can establish a mean value estimate of the type
\[\int_{0}^{1} \left|\sum_{x \in A}e(\a x^k) \right|^s d\a \ll (\# A)^{s-k+\eps},\]
where $A$ is the set from which your solutions may come. Then, by a direct analogue of the arguments in Section \ref{sec:distinctentries}, we obtain a version of \cite[Theorem 1.4]{Rome2024} which provides an asymptotic for the number of solutions with distinct entries.
\begin{lemma}\label{distkpowerslemma}
    Let $k \ge 2$ and $C \in \Z^{r \times s}$ where $s \ge r \min\{2^{k},k(k+1)\} +2$. Suppose that $C$ dominates the function
    \[F(x) = \max\left\{ \frac{x- r\{s/r\}}{\floor{s/r}},  \frac{x- r\{(s-1)/r\}}{\floor{(s-1)/r}}, \frac{x- r\{(s-2)/r\}}{\floor{(s-2)/r}} \right\},\]
    then one has that
    \[\#S^*_k(P;C) = P^{s-rk}\left(\sigma_k(C) + o(1)  \right),\]
    where $\sigma_k(C) \ge 0$ is a real number depending only on $k$ and $C$. Additionally $\sigma_k(C) > 0$ if there exists nonsingular real and $p$-adic solutions to the system (\ref{GenSys}).
\end{lemma}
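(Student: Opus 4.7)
The plan is to follow the structure of Section \ref{sec:distinctentries} exactly, replacing the simultaneous Vinogradov-type system with a single $k$th power Diophantine equation and invoking the Hardy-Littlewood argument of Rome and Yamagishi \cite{Rome2024} in place of the cited bound from \cite{Flores2024}. First I would record the single-power analogue of (\ref{splitrepterms}), namely
\[\# S^*_k(P;C) = \# S_k(P;C) + O\left(\sum_{1 \le i < j \le s} \# S_k\left(P;C^{(i,j)}\right)\right),\]
which follows verbatim from the identity $\#\{\bfx \in S_k(P;C) : x_i = x_j\} = \# S_k(P; C^{(i,j)})$. It then remains to produce the asymptotic for the main term and to show that the sum of error counts is of smaller order.

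For the main term, I would observe that since $C$ dominates $F(x)$, in particular it dominates $\frac{x - r\{s/r\}}{\floor{s/r}}$, so by \cite[Lemma 1]{Low1988} the matrix $C$ contains a partitionable submatrix of size $r \times r\floor{s/r}$. The hypothesis $s \ge r\min\{2^k,k(k+1)\}+2$ ensures that $\floor{s/r} \ge \min\{2^k,k(k+1)\}$, which is precisely the input to the circle method argument of \cite{Rome2024}: on the minor arcs one appeals to Hua's lemma (for $2 \le k \le 4$) or Wooley's mean value theorem (for $k \ge 5$) to split the integrand across the partitionable submatrix and extract the saving $P^{-rk}$; on the major arcs the standard singular integral and singular series treatment produces the factor $\sigma_k(C) + o(1)$, which is positive under the stated real and $p$-adic nonsingular solubility. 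This yields $\# S_k(P;C) = P^{s-rk}(\sigma_k(C) + o(1))$.

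For each error term I would use the argument already employed in Section \ref{sec:distinctentries}: the matrix $C^{(i,j)}$ contains, as an $r \times (s-2)$ block, the submatrix $C_{J'}$ of $C$ with $J' = \{1,\ldots,s\} \setminus \{i,j\}$. The domination of $F$ by $C$ implies that $C_{J'}$ dominates $\frac{x - r\{(s-2)/r\}}{\floor{(s-2)/r}}$, and a second application of \cite[Lemma 1]{Low1988} extracts a partitionable submatrix of $C^{(i,j)}$ of size $r \times r\floor{(s-2)/r}$. Since $\floor{(s-2)/r} \ge \min\{2^k,k(k+1)\}$, the single-power analogue of Lemma \ref{simplelemma} gives
\[\# S_k\left(P;C^{(i,j)}\right) \ll P^{(s-1)-rk+\eps}.\]
Summing over the $O(s^2)$ pairs yields $O(P^{s-1-rk+\eps})$, which is dominated by the main term $P^{s-rk}$, completing the asymptotic.

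The only real obstacle is confirming that the circle method analysis of \cite{Rome2024} transfers from their magic-square coefficient matrices to an arbitrary $C$ meeting our hypothesis. However, because their minor-arc bound is driven purely by the size of an available partitionable submatrix and their major-arc treatment is the standard one, the transfer is essentially routine once the partitionable substructure has been arranged through Low's lemma; the domination assumption on $F$ is exactly the uniform substitute that makes both the main-term and the error-term applications of that lemma simultaneously available.
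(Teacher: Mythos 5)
Your proposal is correct and follows essentially the same route as the paper, which itself only sketches this lemma by saying one repeats the Rome--Yamagishi circle method with the domination hypothesis feeding \cite[Lemma 1]{Low1988} to supply the needed partitionable submatrices, and then applies the splitting identity (\ref{splitrepterms}) from \cref{sec:distinctentries} to pass to distinct entries. Your write-up in fact supplies slightly more detail than the paper does (explicitly separating the main term, which uses $\floor{s/r}$, from the $O(s^2)$ error terms, which use the $r\times(s-2)$ submatrices and $\floor{(s-2)/r}\ge\min\{2^k,k(k+1)\}$), and this matches the intended argument.
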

Let
\[\mathscr{A}(Q) = \{\bfx \in \Z^s: \text{prime } p \mid x_i \text{ for any $1 
\le i \le s$, implies } p \le Q\},\]
then the same may be done to obtain an analogue of \cite[Theorem 1.5]{Rome2024} which provides an asymptotic for the number of smooth solutions with distinct entries.
\begin{lemma}\label{distsmoothkpowerslemma}
    Let $k \ge 2$ and $C \in \Z^{r \times s}$ where $s \ge r \ceil{k(\log k + 4.20032)}+2$. Suppose that $C$ dominates the function
    \[F(x) = \max\left\{ \frac{x- r\{s/r\}}{\floor{s/r}},  \frac{x- r\{(s-1)/r\}}{\floor{(s-1)/r}}, \frac{x- r\{(s-2)/r\}}{\floor{(s-2)/r}} \right\},\]
    then when $\eta >0$ is sufficiently small in terms of $s,r,k,$ and $C$ one has that
    \[\# \left(S^*_k(P;C) \cap \mathscr{A}(P^\eta) \right) = c(\eta) P^{s-rk}\left(\sigma_k(C) + o(1)  \right),\]
    where $\sigma_k(C)$ is the same quantity as in Lemma \ref{distkpowerslemma} and $c(\eta)>0$ depends only on $\eta$.
\end{lemma}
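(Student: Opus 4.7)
The plan is to mirror the derivation of Lemma~\ref{distkpowerslemma}, replacing Vinogradov's mean value theorem by Wooley's bound for smooth Weyl sums and invoking \cite[Theorem 1.5]{Rome2024} in place of \cite[Theorem 1.4]{Rome2024}. The splitting identity (\ref{splitrepterms}) remains valid after intersecting each solution set with $\mathscr{A}(P^\eta)$, yielding
\[\#\bigl(S^*_k(P;C)\cap \mathscr{A}(P^\eta)\bigr) = \#\bigl(S_k(P;C)\cap \mathscr{A}(P^\eta)\bigr) + O\!\left(\sum_{1\le i<j\le s}\#\bigl(S_k(P;C^{(i,j)})\cap \mathscr{A}(P^\eta)\bigr)\right).\]
Writing $t = \ceil{k(\log k + 4.20032)}$, the hypothesis $s \ge rt+2$ guarantees $\floor{(s-j)/r} \ge t$ for each $j \in \{0,1,2\}$, which is used repeatedly below.

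For the main term, since $C$ dominates the first summand of $F$, \cite[Lemma 1]{Low1988} provides a partitionable submatrix of $C$ of size $r \times r\floor{s/r}$, which has at least $rt$ columns. This suffices to run the Hardy-Littlewood analysis of \cite[Theorem 1.5]{Rome2024} over smooth variables, producing
\[\#\bigl(S_k(P;C)\cap \mathscr{A}(P^\eta)\bigr) = c(\eta) P^{s-rk}\bigl(\sigma_k(C)+o(1)\bigr),\]
valid for $\eta$ sufficiently small in terms of $s, r, k$, and $C$, with $\sigma_k(C)$ the same product of local densities appearing in Lemma~\ref{distkpowerslemma} (whose positivity under the stated solubility hypotheses is inherited from that lemma) and with $c(\eta)>0$ capturing the density of smooth variables near the major arcs.

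For the error, $C^{(i,j)}$ contains the submatrix of $C$ formed by deleting columns $i$ and $j$; this $(s-2)$-column submatrix, by the dominating property applied to the third summand of $F$ together with \cite[Lemma 1]{Low1988}, admits a partitionable submatrix of size $r \times r\floor{(s-2)/r} \ge r \times rt$. The smooth single-degree analogue of Lemma~\ref{simplelemma} --- obtained by inserting Wooley's bound $\int_0^1|f(\a;P,P^\eta)|^{2t}\,d\a \ll P^{2t-k+\eps}$ into a Birch-style minor-arc argument in place of the Vinogradov mean value theorem --- then yields
\[\#\bigl(S_k(P;C^{(i,j)})\cap \mathscr{A}(P^\eta)\bigr) \ll P^{(s-1)-rk+\eps}.\]
Summing over the $O(s^2)$ pairs $(i,j)$ gives a total error of $O(P^{s-rk-1+\eps})=o(P^{s-rk})$, which combines with the main-term asymptotic to complete the proof.

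The principal obstacle is verification rather than creativity: one must check that the circle-method argument of \cite[Theorem 1.5]{Rome2024} runs cleanly from the weaker hypothesis of a sufficiently large partitionable submatrix (as supplied here by the dominating condition on $F$) rather than from a prescribed structural hypothesis on the full matrix, and that Wooley's smooth mean value estimate integrates into the attendant Davenport-style pruning without forcing $s$ beyond the threshold $rt+2$.
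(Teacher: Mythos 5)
Your proposal follows the same route as the paper: intersect the splitting identity (\ref{splitrepterms}) with $\mathscr{A}(P^\eta)$, use the domination of $F$ together with \cite[Lemma 1]{Low1988} to extract partitionable submatrices with at least $t=\ceil{k(\log k + 4.20032)}$ rank-$r$ blocks from $C$ and from each $C^{(i,j)}$, and then run the smooth-variable circle method of \cite[Theorem 1.5]{Rome2024} for the main term and a H\"older/mean-value argument for the error term $O(P^{s-1-rk+\eps})$, exactly as the paper indicates. The one blemish is the exponent in your quoted smooth mean value: with $t$ blocks the estimate you need is $\int_0^1 |f(\alpha;P,P^\eta)|^{t}\,d\alpha \ll P^{t-k+\eps}$ (the constant $4.20032$ is calibrated so that this holds at the $t$-th moment), not the $2t$-th moment, though this slip does not affect the structure or the conclusion.
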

Applying Lemma \ref{distkpowerslemma} and Lemma \ref{distsmoothkpowerslemma} with $C = C_N^{\text{magic}}$ and noting that the analysis in \cite[Section 5]{Flores2024} implies $\sigma_k(C_N^{\text{magic}})$ is positive, we deduce Theorem \ref{kthpowersThm}.

\bibliographystyle{amsbracket}
\providecommand{\bysame}{\leavevmode\hbox to3em{\hrulefill}\thinspace}

\end{document}